\DeclareMathOperator{\bur}{bur}
\DeclareMathOperator{\diam}{diam}
\DeclareMathOperator{\ind}{ind}
\DeclareMathOperator{\inter}{int}
\newtheorem{ut}{Theorem}
\newtheorem{ul}[ut]{Lemma}
\newtheorem{ucl}[ut]{Claim}
\newtheorem*{utmA}{Theorem A}
\newtheorem*{utmB}{Corollary B}
\newtheorem*{utmC}{Corollary C}
\theoremstyle{definition}
\newtheorem{ur}{Remark}
\newtheorem{ue}{Example}
\author[Lipham,  van Mill, Tuncali,  Tymchatyn,  Valkenburg]{David Lipham, Jan van Mill, Murat Tuncali, Ed Tymchatyn, Kirsten Valkenburg}
\address{Department of Mathematics and Data Science, College of Coastal Georgia, Brunswick GA 31520, United States of America}
\email{dlipham@ccga.edu}
\address{KdV Institute for Mathematics, University of Amsterdam, Science Park 105-107, 1090 GE Amsterdam, The Netherlands}
\email{j.vanmill@uva.nl}
\address{Department of Computer Science and Mathematics, Nipissing University, 100 College Drive, North Bay, ON, P1B 8L7, Canada}
\email{muratt@nipissingu.ca}
\address{Department of Mathematics and Statistics, University of Saskatchewan, 106 Wiggins Road, Saskatoon, SK, S7N 5E6, Canada}
\email{tymchat@math.usask.ca}
\email{kirstenvalkenburg@gmail.com}
\title{Buried points of plane continua}
\subjclass[2020]{37B45, 54F15, 54F45} 
\keywords{buried points, totally disconnected, Suslinian, plane continuum}
\begin{document}

\begin{abstract}
Sets on the boundary of a complementary component of a continuum in the plane have been of interest since the early 1920’s. Curry and Mayer defined the buried points of a plane continuum to be the points in the continuum which were not on the boundary of any complementary component. Motivated by their investigations of Julia sets, they asked what happens if the set of buried points of a plane continuum is totally disconnected and non-empty. Curry, Mayer and Tymchatyn showed  that in that case the continuum is Suslinian, i.e.\ it does not contain an uncountable collection of non-degenerate pairwise disjoint subcontinua. In an answer to a question of Curry et al, van Mill and Tuncali constructed a plane continuum whose buried point set was totally disconnected, non-empty and one-dimensional at each point of a countably infinite set. In this paper we show that the van Mill-Tuncali example was the best possible in the sense that whenever the buried set is totally disconnected,  it is one-dimensional at each of at most countably many points. As a corollary we find that the buried set  cannot be almost zero-dimensional unless it is zero-dimensional. We also construct locally connected van Mill-Tuncali type examples. 
\end{abstract}

\maketitle

\section{Introduction}

A continuum is a compact, connected metric space. A point $x$ in a plane continuum $X\subset \mathbb R^2$ is \textbf{buried} if $x$ is not in the boundary (or frontier) of any component of $\mathbb R ^2\setminus X$.  We denote by $\bur(X)$ the set of all buried points of $X$. Note that $\bur(X)$ is a $G_{\delta}$-set.

Motivation to study buried points  of a continuum  comes from complex dynamics. Specifically: One of the  difficult problems in complex dynamics asks whether $\bur(J(R))$ must be zero-dimensional in case it is  punctiform, where $J(R)$ is  the Julia set of  a rational function on the sphere $\mathbb C_\infty$.  The Devaney-Rocha examples of Sierpiński gasket like Julia sets have this property.  Curry, Mayer and Tymchatyn proposed to study it from a purely topological point of view. For more details, see \cite{cr,fee,dev}.

In \cite[Proposition 3.1]{w4} van Mill and Tuncali prove that the set of buried points of a plane continuum can be a Cantor set. They use this to provide in §4 of their paper an example of a plane Suslinian continuum whose set of buried points is totally disconnected and weakly one-dimensional, but not zero-dimensional. Their example answers in the negative Question 1 of Curry, Mayer and Tymchatyn \cite{fee}, concerning whether or not a set of buried points is zero-dimensional provided it is totally disconnected. Note that if a plane continuum is regular (that is, has a basis of open sets with finite boundaries), then the set of buried points is either empty or zero-dimensional (see \cite[Theorem 3]{fee}).
  
Curry et al. showed that if $X$ is a plane continuum with complementary components whose boundaries are locally connected and $\bur(X)$ is Suslinian, then in fact $X$ has to be Suslinian \cite[Theorem 6]{fee}. This is the case for the example in \cite[§4]{w4} (the buried point set in that example is even totally disconnected). However, it  is not locally connected.  A plane continuum $X$ is locally connected if and only if the boundaries of complementary components are locally connected and form a null-sequence \cite[VI Theorem 4.4]{why}. Using this fact, we construct   in §5 a locally connected plane continuum whose set of buried points is totally disconnected but not zero-dimensional. The set at which the buried points are $1$-dimensional is countable. This is sharp according to the following theorem, which we prove in §3:
  
\begin{utmA}Let $X$ be a Suslinian plane continuum.  If  $Y\subset X$ is a totally disconnected Borel set, then $Y$ is zero-dimensional at all but countably many points.\end{utmA}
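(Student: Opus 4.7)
The plan is to argue by contradiction. Suppose $Z := \{y \in Y : Y \text{ is not zero-dimensional at } y\}$ is uncountable. I would first make the witness uniform: writing $Z = \bigcup_n Z_n$ where $Z_n$ is the set of $y \in Y$ such that every clopen subset of $Y$ containing $y$ has diameter at least $1/n$, some $Z_n$ must be uncountable, and I set $\epsilon := 1/n$.

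The first key step is to manufacture, for each $y \in Z_n$, a non-degenerate subcontinuum of $\bar Y$ through $y$. I would take $K_y$ to be the component of $y$ in the compactum $\bar Y \cap \bar B(y, \epsilon/4)$ (closures in $X$). Non-degeneracy is seen as follows: in a compact metric space components coincide with quasi-components, so if $K_y = \{y\}$ there is a clopen subset $V$ of $\bar Y \cap \bar B(y, \epsilon/4)$ with $y \in V$ and $\diam V < \epsilon/4$. Such a $V$ sits inside the open ball $B(y, \epsilon/4)$, and is therefore actually clopen in $\bar Y$; then $V \cap Y$ is a clopen subset of $Y$ containing $y$ of diameter less than $\epsilon$, contradicting $y \in Z_n$.

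Next, I would apply Suslinianness to pigeonhole the $K_y$'s. Since the non-degenerate components of $\bar Y$ form a pairwise disjoint family of non-degenerate subcontinua of the Suslinian continuum $X$, there are at most countably many of them. So an uncountable $Z' \subseteq Z_n$ has all of its $K_y$'s inside a single non-degenerate component $N$ of $\bar Y$, which is itself a Suslinian plane continuum.

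The remaining step, which is the main obstacle, is to derive a contradiction with the Suslinianness of $N$ by extracting an uncountable pairwise disjoint subfamily of $\{K_y : y \in Z'\}$. For each $y \in Z'$ and each $\delta \in (0, \epsilon/4]$ the same argument produces a non-degenerate continuum $K_y^\delta$ (the component of $y$ in $\bar Y \cap \bar B(y, \delta)$), giving the freedom to shrink. The plan is a transfinite construction of length $\omega_1$: at stage $\alpha$, choose $y_\alpha \in Z'$ outside the closure of the continua already built, and take $\delta_\alpha$ small enough that $K_{y_\alpha}^{\delta_\alpha}$ is disjoint from them. A useful local input that drives the construction is that the uniform non-zero-dimensionality at $y \in Z'$ forces $Y \cap S(y, \rho) \neq \emptyset$ for every $\rho \in (0, \epsilon/2)$ (otherwise $Y \cap B(y,\rho)$ would be clopen in $Y$ of diameter $< \epsilon$), so $Y$ surrounds each such $y$ radially on every small scale. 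The hard part, which I expect to be the crux, is ruling out early termination of the recursion --- equivalently, showing that uncountably many points of $Z'$ cannot all be absorbed into the closure of a countable collection of small subcontinua of $N$. Here I expect the planar embedding to play the essential role, via cross-cut arguments in $\mathbb{R}^2$ that convert the omnidirectional $Y$-accumulation at each $y \in Z'$, together with the Borel (hence perfect-set-theorem accessible) nature of $Z$, into the geometric separation required to keep the recursion going up to $\omega_1$.
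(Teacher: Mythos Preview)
Your proposal has a genuine gap, and more seriously, the framework you set up discards the essential information before you reach the hard step.

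Once you pigeonhole into a single non-degenerate component $N$ of $\overline Y$, your continua $K_y^\delta$ are just the components of $y$ in $N\cap \overline B(y,\delta)$, and by boundary bumping these are non-degenerate for \emph{every} $y\in N$, not only for $y\in Z'$. So at the moment you begin the transfinite recursion, nothing distinguishes $Z'$ from an arbitrary uncountable subset of the Suslinian continuum $N$; extracting uncountably many pairwise disjoint non-degenerate subcontinua from that data alone is exactly what Suslinianness forbids. Your ``omnidirectional $Y$-accumulation'' observation $Y\cap S(y,\rho)\neq\varnothing$ does not rescue this: it too can hold for every point of $N$ (e.g.\ whenever $Y$ is dense in $N$). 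In particular, the total-disconnectedness hypothesis is stated but never actually used in your argument, which is a sign that something essential is missing.

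The paper proceeds quite differently. First it reduces to the case where $X$ is locally connected, by embedding any Suslinian plane continuum into a locally connected one. The auxiliary continuum attached to $y\in\Lambda(Y)$ is not a piece of $\overline Y$ but rather $F(y)=\bigcap\{\overline U: U\text{ open in }X,\ y\in U,\ \partial U\cap Y=\varnothing\}$. Total disconnectedness of $Y$ is used precisely here, to show $F(y)\cap Y=\{y\}$: each $F(y)$ lives in $X\setminus Y$ except at the single point $y$. That is what lets these continua serve as barriers in the plane. After extracting a Cantor set of $z$'s with $\diam F(z)$ uniformly large inside a small disc, the paper uses a $\theta$-curve separation lemma (four disjoint arcs meeting a circle in cyclic order, plus a continuum joining two opposite ends, separate the other two ends) to build a binary tree of connected open sets $W_\alpha$, each meeting the Cantor set, with disjoint closures at each level and boundaries contained in $(X\setminus Y)\cup S$ for a fixed circle $S$. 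The intersections along branches give $2^{\aleph_0}$ pairwise disjoint continua stretching from the Cantor set to $S$, contradicting Suslinianness. The Suslinian hypothesis itself is used inside this construction (to force four of the $F(z)$'s to meet), not as a target to contradict via a bare transfinite recursion.
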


\begin{utmB}Let $X$ a plane continuum such that the boundary of each component of $\mathbb R^2\setminus X$ is locally connected (e.g.\ $X$ is locally connected).  If $\bur(X)$ is totally disconnected, then $\bur(X)$ is zero-dimensional at all but countably many points.\end{utmB}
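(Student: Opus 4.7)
The plan is to deduce Corollary B as a short consequence of Theorem A together with the Curry--Mayer--Tymchatyn result \cite[Theorem 6]{fee} cited in the introduction. The only thing that needs to be checked is that the hypotheses of Theorem A are satisfied by $X$ and by $Y:=\bur(X)$.

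First I would verify that $X$ is Suslinian. Since $\bur(X)$ is totally disconnected, it contains no non-degenerate subcontinuum at all, so trivially $\bur(X)$ is Suslinian (an uncountable disjoint family of non-degenerate subcontinua cannot even produce one such subcontinuum). The hypothesis that every complementary component of $X$ has locally connected boundary now lets us invoke \cite[Theorem 6]{fee} to conclude that $X$ itself is Suslinian.

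Next I would check that $\bur(X)$ is a Borel subset of $X$. This is noted in the first paragraph of the introduction: $\bur(X)$ is a $G_\delta$-set in $X$, hence Borel. Combined with total disconnectedness, this means $Y=\bur(X)$ meets the hypotheses of Theorem A inside the Suslinian plane continuum $X$. Applying Theorem A to this $Y$ gives that $\bur(X)$ is zero-dimensional at all but countably many of its points, which is exactly the conclusion of Corollary B.

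There is no real technical obstacle here; the content of the corollary is packaged entirely in the two imported results. The only mild point to be careful about is not to confuse ``$\bur(X)$ is Suslinian'' with ``$X$ is Suslinian'' when invoking \cite[Theorem 6]{fee}: the theorem promotes the former (which we get for free from total disconnectedness) to the latter (which is precisely the hypothesis we need on the ambient continuum before applying Theorem A).
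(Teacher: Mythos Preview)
Your argument is correct and follows the same route as the paper: use the Curry--Mayer--Tymchatyn theorem to promote Suslinianness from the (totally disconnected, hence trivially Suslinian) buried set to $X$, observe that $\bur(X)$ is $G_\delta$ and hence Borel, and then invoke Theorem~A. The paper's \S4 records exactly this, citing the result from \cite{fee} (as Theorem~3.5 there rather than Theorem~6, but the content is the one you describe).
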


Countable sets are zero-dimensional, so under the assumptions of Corollary B the buried set is either zero-dimensional or weakly $1$-dimensional. There is no almost zero-dimensional, weakly $1$-dimensional space  \cite[Theorem 1]{co2h}. Therefore $\bur(X)$ cannot be almost zero-dimensional in the proper sense (cf.\ \cite[Question 2.7]{cr}). To summarize:

\begin{utmC}Let $X$ be a plane continuum such that each component of $\mathbb R^2\setminus X$ has locally connected boundary (e.g.\ $X$ is locally connected).
\begin{itemize}
\item[\textnormal{(i)}] If $\bur(X)$ is totally disconnected, then $\bur(X)$ is at most weakly $1$-dimensional.
\item[\textnormal{(ii)}]  If $\bur(X)$ is almost zero-dimensional, then $\bur(X)$ is zero-dimensional. 
\end{itemize}\end{utmC}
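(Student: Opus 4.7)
The plan is to deduce Corollary~C directly from Corollary~B together with two standard facts about almost zero-dimensional spaces; the argument is essentially bookkeeping of definitions, since the content is already packaged in Theorem~A.

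For part~(i), I would feed $\bur(X)$ into Corollary~B to obtain a countable set $E\subset\bur(X)$ such that $\bur(X)$ is zero-dimensional at every point of $\bur(X)\setminus E$. Being countable, $E$ is itself zero-dimensional as a subspace. Writing $\bur(X)=(\bur(X)\setminus E)\cup E$ and using the countable sum theorem (each point of $\bur(X)\setminus E$ has arbitrarily small neighborhoods in $\bur(X)$ whose closures are zero-dimensional, so $\bur(X)\setminus E$ is $\sigma$-zero-dimensional), we obtain $\dim\bur(X)\le 1$. Either $\bur(X)$ is zero-dimensional, or it is $1$-dimensional and the set of points of positive local dimension is contained in $E$, hence zero-dimensional. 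In the latter case $\bur(X)$ is weakly $1$-dimensional by definition; in either case $\bur(X)$ is at most weakly $1$-dimensional.

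For part~(ii), I would use two ingredients: first, every almost zero-dimensional space is totally disconnected (the C-sets in a witnessing base separate pairs of points into distinct quasi-components, forcing singleton quasi-components); second, there is no almost zero-dimensional, weakly $1$-dimensional space by \cite[Theorem~1]{co2h}. Assuming $\bur(X)$ is almost zero-dimensional, the first fact lets us invoke part~(i), so $\bur(X)$ is either zero-dimensional or weakly $1$-dimensional; the second rules out the latter, leaving only zero-dimensional.

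The only mild technical obstacle I anticipate is reconciling the conclusion ``zero-dimensional at all but countably many points'' of Corollary~B with the definition of weakly $1$-dimensional that is in force in \cite{co2h}; in the separable metric setting this is routine via the classical sum theorem, as sketched above, so no serious obstruction is expected.
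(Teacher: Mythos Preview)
Your proposal is correct and follows exactly the route the paper takes: Corollary~B gives $\Lambda(\bur(X))$ countable (hence zero-dimensional), so $\bur(X)$ is zero-dimensional or weakly $1$-dimensional, and then \cite[Theorem~1]{co2h} together with the observation that almost zero-dimensional spaces are totally disconnected finishes part~(ii). Your parenthetical justification for $\dim\bur(X)\le 1$ is slightly garbled (clopen neighborhoods need not have zero-dimensional closure), but the intended decomposition $\bur(X)=(\bur(X)\setminus E)\cup E$ into two zero-dimensional pieces does the job via the addition theorem, and the paper simply leaves this step implicit.
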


 It is still unknown whether the Julia set of a rational function may have a buried set which is totally disconnected but not zero-dimensional (cf.\ \cite[Question 3]{fee}). We expect that Theorem A will have  other applications, for example, relating to endpoints of plane dendroids and chainable continua.

\section{Definitions}

A space $X$ is \textbf{Suslinian} if each collection of pairwise disjoint non-degenerate continua in $X$ is  countable. 

A space $X$  is \textbf{totally disconnected} if every two points of $X$ are contained in disjoint clopen sets, and \textbf{zero-dimensional} if $X$ has a basis of clopen sets.  Respectively, $X$ is  \textbf{zero-dimensional at} $x\in X$ if the point $x$ has a neighborhood basis of clopen sets (written $\ind_x X=0$ in   \cite[Problem 1.1.B]{engd}). 

A space $X$ is \textbf{almost zero-dimensional} if $X$ has a basis of open sets whose closures are intersections of clopen sets \cite{ov2,co2h}. Observe that every almost zero-dimensional space is totally disconnected.

The \textbf{dimensional kernel} $\Lambda(X)$ of a $1$-dimensional space $X$ is defined to be the set of all points at which $X$ is not zero-dimensional. Equivalently,  $$\Lambda(X)=\{x\in X:\ind_x X=1\}.$$ A $1$-dimensional space $X$ is  \textbf{weakly $\mathbf{1}$-dimensional} if $\Lambda(X)$ is zero-dimensional.

\section{Main Result}

Suppose that $X$ is a continuum and $Y\subset X$.   For each $y\in Y$ let $\mathcal U_y$ be the collection of all  open subsets $U$ of $X$ such that $y\in U$ and $\partial U\subset X\setminus Y$.  For each $y\in Y$ put 
$$F(y) =\bigcap_{U\in \mathcal U_y} \overline{U}.$$

\begin{ul}If $Y\subset X$ is totally disconnected and $y\in\Lambda(Y)$, then 
$F(y)$ is a non-degenerate subcontinuum of $X$ and $F(y)\cap Y = \{y\}$.\end{ul}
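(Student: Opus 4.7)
The plan is to proceed in three stages, anchored on one key bridge observation: for each $U \in \mathcal U_y$, the hypothesis $\partial U \cap Y = \emptyset$ forces $\overline U \cap Y = U \cap Y$, so $U \cap Y$ is a \emph{clopen} neighborhood of $y$ in $Y$. Since $\mathcal U_y$ is closed under finite intersection (as $\partial(U \cap U') \subset \partial U \cup \partial U'$), the family $\{\overline U : U \in \mathcal U_y\}$ is downward directed, so $F(y)$ is already nonempty and compact. What remains is connectedness, non-degeneracy, and the intersection $F(y) \cap Y = \{y\}$.

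For connectedness I would run the standard separation argument: if $F(y) = A \sqcup B$ nontrivially with $y \in A$, use normality of $X$ to obtain disjoint open $V \supset A$, $W \supset B$, and the filter-base property to find $U \in \mathcal U_y$ with $\overline U \subset V \cup W$. The decisive check is that $U_1 := U \cap V$ still lies in $\mathcal U_y$, because $\partial U_1 \subset \partial U$ (using $\overline V \cap W = \emptyset$, i.e., disjoint open sets have separated closures); then $F(y) \subset \overline{U_1} \subset \overline V$ contradicts $\emptyset \neq B \subset W$. For non-degeneracy I invoke $y \in \Lambda(Y)$ for the first time: there exists $\varepsilon > 0$ such that no clopen neighborhood of $y$ in $Y$ fits inside $B(y,\varepsilon)$, so for every $U \in \mathcal U_y$ the clopen set $U \cap Y$ contains a point at distance $\geq \varepsilon$ from $y$; a filter-base compactness argument then places such a point into $F(y)$.

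The main obstacle is the final claim $F(y) \cap Y = \{y\}$, since this requires lifting a clopen separation inside $Y$ to an element of $\mathcal U_y$, i.e., to an open separation in $X$ whose boundary misses all of $Y$. Given a putative $z \in F(y) \cap Y$ with $z \neq y$, total disconnectedness yields clopen $C \subset Y$ with $y \in C \not\ni z$, and I plan to construct disjoint open $V, W \subset X$ realizing $V \cap Y = C$, $W \cap Y = Y \setminus C$ via a weighted-ball construction: set $V = \bigcup_{c \in C} B(c, r_c/3)$, where $r_c = d(c, Y \setminus C)$, and analogously $W = \bigcup_{d \in Y \setminus C} B(d, s_d/3)$. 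Positivity of $r_c$ uses that $C$, being closed in $Y$, puts $c$ at positive $X$-distance from $Y \setminus C$; disjointness $V \cap W = \emptyset$ and $V \cap Y = C$ then fall out of the triangle inequality. Since $Y \subset V \cup W$ with $V, W$ disjoint open, $\partial V \cap Y = \emptyset$ is forced, so $V \in \mathcal U_y$; yet $z \in W$ gives $z \notin \overline V \supset F(y)$, the desired contradiction.
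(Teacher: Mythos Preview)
Your proof is correct and follows essentially the same three-step route as the paper: the identical separation argument for connectedness, the contrapositive form of your non-degeneracy argument (the paper shows $F(y)=\{y\}\Rightarrow \ind_y Y=0$ directly), and the same lifting of a clopen partition of $Y$ to disjoint open sets in $X$ for $F(y)\cap Y=\{y\}$—you spell out the weighted-ball construction, while the paper simply asserts the existence of such $V_0,V_1$. One wording quibble: ``disjoint open sets have separated closures'' overstates what you actually need and use, namely $\overline V\cap W=\varnothing$, which does hold for any pair of disjoint open sets.
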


\begin{proof}Suppose $F(y)=\{y\}$ is degenerate. We show that $Y$ is zero-dimensional at $y$. Let $V$ be an arbitrary relatively open subset of $Y$ that contains $y$. Pick
an open subset $W$ of $X$ such that $W \cap Y = V$. Since $F(y) = \{y\} \subset V$,
there is by compactness an element $U \in \mathcal U_y$ such that $\overline{U} \subset W$.
But then $y \in U \cap Y \subset V$, and $U \cap Y$ is clopen in $Y$.

Now suppose that $y\in Y$ and  $z\in Y\setminus \{y\}$. Pick by total disconnectedness of $Y$ clopen subsets $C_0$ and $C_1 = Y \setminus C_0$
of $Y$ such that $y \in C_0$ and $z \in C_1$. There are disjoint open subsets $V_0$
and $V_1$ of $X$ such that $V_0 \cap Y = C_0$ and $V_1 \cap Y = C_1$. Observe that the
boundary of $V_0$ is contained in $X \setminus Y$. Since $V_1 \cap V_0 = \varnothing$, this shows
that $z \notin F(y).$ Hence $F(y)\cap Y = \{y\}$.

It remains to prove $F(y)$ is connected. Just suppose $F(y) = A \cup B$
where $A$ and $B$ are disjoint closed sets with $y \in A$. Let $U$ and $V$ be disjoint
open neighborhoods of $A$ and $B$ in $X$, respectively. By compactness there exists $W \in \mathcal U_y$ such that 
$W \subset U \cup V$. Then $W \cap U \in \mathcal U_y$, so $B = \varnothing.$\end{proof}

\begin{ur}If $X$ is locally connected and $U\in \mathcal U_y$, then the connected component of $y$ in $U$ also belongs to $\mathcal U_y$.  For this reason we can (and will) assume that each $\mathcal U_y$ consists entirely of path-connected open subsets of $X$. \end{ur}

\renewcommand{\i}{\mathrm{i}}

\begin{ul}Let $S$ be a circle in the plane, and $a_1,a_2,a_3,a_4$  four points of $S$ in cyclic order. Let $A_i$, $i\in \{1,2,3,4\}$,  be pairwise disjoint arcs intersecting $S$ only at an endpoint $a_i$, with opposite endpoints $b_i$ all in the bounded component of $\mathbb R ^2\setminus S$ denoted $\inter(S)$. If $K\subset \mathbb R ^2\setminus (A_2\cup S\cup A_4)$ is a continuum containing $b_1$ and $b_3$, then $b_2$ and $b_4$ are in different components of $\mathbb R^2\setminus (A_1\cup S\cup A_3\cup K)$. \end{ul}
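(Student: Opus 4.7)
The plan is to argue by contradiction using the Jordan curve theorem. If $b_2$ and $b_4$ lie in the same component of $\mathbb R^2\setminus(A_1\cup S\cup A_3\cup K)$, then since this set is open I can find an arc $\gamma$ from $b_2$ to $b_4$ inside it; because $\gamma$ is connected, misses $S$, and contains $b_2\in\inter(S)$, it must lie entirely in $\inter(S)$.

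The next step is to splice $A_2$, $\gamma$, and $A_4$ into an arc $C_2$ from $a_2$ to $a_4$ that meets $S$ only at its endpoints, and then close it off with the subarc $S_1\subset S$ running from $a_2$ to $a_4$ through $a_1$. By cyclic order $a_3$ lies in the interior of the other subarc $S_2$. Then $J:=C_2\cup S_1$ is a simple closed curve contained in $\overline{\inter(S)}$, and the Jordan curve theorem supplies a bounded component $I$ and an unbounded component $E$ of $\mathbb R^2\setminus J$; in particular $\mathbb R^2\setminus\overline{\inter(S)}\subset E$.

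The heart of the argument is then to place $b_1$ in $I$ and $b_3$ in $E$. A direct check using $A_1\cap S=\{a_1\}$, pairwise disjointness of the $A_i$, and $A_1\cap\gamma=\varnothing$ shows $A_1\cap J=\{a_1\}$, while analogous computations give $A_3\cap J=\varnothing$ and $K\cap J=\varnothing$. Near $a_1$ the curve $J$ coincides locally with a subarc of $S$, so its two local sides are the $\inter(S)$-side and the exterior-of-$S$ side; the latter already lies in $E$, which forces the $\inter(S)$-side near $a_1$—and with it the connected set $A_1\setminus\{a_1\}$—to lie in $I$, so $b_1\in I$. On the other hand, $a_3$ sits in the open arc $S_2\setminus\{a_2,a_4\}$, which is disjoint from $J$ and locally meets the exterior of $S$; hence $a_3\in E$, and since $A_3$ is a connected subset of $\mathbb R^2\setminus J$ containing $a_3$, also $b_3\in E$.

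Now $K$ is a connected set disjoint from $J$ that contains both $b_1\in I$ and $b_3\in E$, which is impossible. This is the contradiction. The delicate point I expect to have to write out carefully is the local side-identification step—relating the inside/outside of $J$ to the inside/outside of $S$ near $a_1$ and $a_3$—which is what rules out the pathological possibility that $A_1\setminus\{a_1\}$ leaks into $E$; everything else is routine intersection bookkeeping once $J$ has been built.
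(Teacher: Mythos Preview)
Your proof is correct and follows essentially the same route as the paper's: both extract from $A_2\cup\gamma\cup A_4$ an arc from $a_2$ to $a_4$ that crosses the disc, use it to separate $b_1$ from $b_3$, and then observe that the connected set $K$ cannot meet both sides without hitting the separating set. The only difference is packaging---the paper invokes the $\theta$-curve theorem (which hands you the two subregions of $\inter(S)$ together with their boundaries $\alpha\cup\overline\gamma$ and $\beta\cup\overline\gamma$, making the placement of $b_1,b_3$ immediate), whereas you close the crosscut with the arc $S_1$ and apply the Jordan curve theorem directly; your flagged ``delicate'' local side-identification near $a_1$ and $a_3$ is precisely the content that the $\theta$-curve theorem absorbs.
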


\begin{proof}We may assume that $S$ is the unit circle in the complex plane $ \mathbb C$, and $$a_1=1,a_2=\i , a_3=-1,a_4=-\i .$$ Let $\gamma\subset \mathbb C\setminus (A_1\cup S\cup A_3)$ be any arc from $b_2$ to $b_4$. We will show that $\gamma$ intersects $K$. This will  imply that  $b_2$ and $b_4$ are in different components of $\mathbb C\setminus (A_1\cup S\cup A_3\cup K)$, as all such components are path-connected.

In $\gamma\cup A_2\cup A_4$ there is an arc $\overline\gamma$ intersecting $S$ only at endpoints $\pm \i$. Let $\alpha$ and $\beta$ be the left and right halves of the circle $S$. By the $\theta$-curve theorem \cite[VI  1.6]{why} (or  Munkres Lemma 64.1) $\text{int}(S)\setminus \overline\gamma$ is the union of two disjoint domains  $U$ and $V$ whose boundaries are  $\alpha\cup \overline\gamma$ and $\overline\gamma\cup \beta$, respectively. Let $W$ be an open ball centered at $-1$ that misses $\overline\gamma$. Then $W\cap \text{int}(S)\subset U\cup V$. Since $W\cap \text{int}(S)$ is connected and $-1\in \partial U$, we have $W\cap \text{int}(S)\subset U$.  Thus $U\cap A_3\neq\varnothing$. It follows that $A_3\setminus \{-1\}\subset U$. Thus $b_3\in U$. Likewise, $b_1\in V$. Now $K$ is a connected subset of $\text{int}(S)$ meeting both $U$ and $V$. So $K\cap \overline\gamma\neq\varnothing$.  Therefore $K\cap \gamma\neq\varnothing$.\end{proof}

We can now prove Theorem A for locally connected $X$.

\begin{ut}Let $X$ be a locally connected Suslinian plane continuum. If $Y\subset X$ is a totally disconnected Borel set, then $\Lambda(Y)$ is countable.\end{ut}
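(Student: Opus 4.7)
I would argue by contradiction, assuming $\Lambda(Y)$ is uncountable. By Lemma 1, each $y\in\Lambda(Y)$ carries a non-degenerate subcontinuum $F(y)\subseteq X$ with $F(y)\cap Y=\{y\}$, and distinct $y$'s yield distinct $F(y)$'s. My goal is to extract from $\{F(y):y\in\Lambda(Y)\}$ an uncountable pairwise disjoint subfamily, contradicting that $X$ is Suslinian.

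I begin with a pigeonhole/Lindel\"of reduction in the plane. Partitioning $\Lambda(Y)$ by $\diam F(y)$ and covering $\mathbb R^{2}$ by small open disks yields an uncountable $\Lambda'\subseteq\Lambda(Y)$, an $\varepsilon>0$, and an open disk $D\subset\mathbb R^{2}$ with $\Lambda'\subseteq D$, $\diam D<\varepsilon/10$, and $\diam F(y)\geq\varepsilon$ for every $y\in\Lambda'$. Pick nested concentric circles $S_1\subsetneq S$ with $\overline D\subset\inter(S_1)$, small enough that each $F(y)$ crosses both. Let $K(y)$ denote the component of $F(y)\cap\overline{\inter(S_1)}$ containing $y$; this is a non-degenerate subcontinuum of $X$ running from $y$ to $S_1$, disjoint from $S$, and still satisfying $K(y)\cap Y=\{y\}$. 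It therefore suffices to find an uncountable $\Lambda''\subseteq\Lambda'$ with $\{K(y):y\in\Lambda''\}$ pairwise disjoint.

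The core step is a planar four-point exclusion via Lemma 2. Given any $y_1,y_2,y_3,y_4\in\Lambda'$, I use the Remark to obtain path-connected $U_{y_i}\in\mathcal U_{y_i}$ and within them construct four pairwise disjoint planar arcs $A_i$ from cyclically ordered $a_i\in S$ inward to $b_i:=y_i$, arranged so that each $A_i$ avoids every $K(y_j)$ off the common endpoint. If $K(y_1)\cap K(y_3)\neq\varnothing$, then $K:=K(y_1)\cup K(y_3)$ is a continuum in $\mathbb R^{2}\setminus(A_2\cup S\cup A_4)$ containing $b_1,b_3$, so Lemma 2 places $y_2,y_4$ in distinct components of $\mathbb R^{2}\setminus(A_1\cup S\cup A_3\cup K)$. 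If additionally $K(y_2)\cap K(y_4)\neq\varnothing$, then the connected set $K(y_2)\cup K(y_4)$ from $y_2$ to $y_4$ (which avoids $S$ by construction and $A_1,A_3$ by the choice of arcs) must meet $K$, forcing some $K(y_i)$ with $i\in\{1,3\}$ to meet some $K(y_j)$ with $j\in\{2,4\}$. I would then iterate this four-point constraint, combined with a Ramsey-type extraction on the uncountable family (using that $\Lambda'$ is separable metric and that $Y$ is Borel), to peel off an uncountable $\Lambda''\subseteq\Lambda'$ on which the $K(y)$'s are pairwise disjoint, contradicting Suslinianity.

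The principal obstacle I anticipate is the careful planar setup behind the four-point step: drawing the arcs $A_i$ to simultaneously avoid the selected $K(y_j)$'s and realize the prescribed cyclic order on $S$, and then extracting from the Lemma 2 constraint enough structure to yield an uncountable independent set in the intersection graph of the $K(y)$'s. The Borel hypothesis on $Y$ is expected to enter at this extraction step via a descriptive-set-theoretic Ramsey-type argument.
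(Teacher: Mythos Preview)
The decisive gap is the ``Ramsey-type extraction,'' and it is not a technicality but the whole difficulty. Your four-point step, even if the arcs $A_i$ are drawn with the required care, yields at best a local constraint on the intersection graph of the $K(y)$'s; it does not produce an uncountable independent set. In fact nothing rules out that \emph{all} of the $K(y)$'s pairwise intersect: since $K(y)\cap Y=\{y\}$, any common point would simply lie in $X\setminus Y$, and one can picture fan-like configurations where every $F(y)$ passes through a fixed $p\in X\setminus Y$. In that situation your Lemma~2 constraint is vacuously satisfied, yet the intersection graph is complete and has no independent pair, let alone an uncountable independent set. No descriptive Ramsey theorem rescues this; the family $\{K(y):y\in\Lambda'\}$ is simply the wrong place to look for the uncountable disjoint collection.

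The paper does not attempt to extract disjoint members from $\{F(y)\}$ at all. Instead it manufactures \emph{new} continua by building a binary tree of connected open sets $W_\alpha\subset X$, each meeting a fixed Cantor set $Z\subset\Lambda(Y)$ and each satisfying $\partial W_\alpha\subset(X\setminus Y)\cup S$, with $\overline{W_{\alpha^\frown 0}}\cap\overline{W_{\alpha^\frown 1}}=\varnothing$. Lemma~2 is invoked not to constrain how the $F(y)$'s meet one another, but to carry out the splitting: given such a $W_\alpha$, one first shows (Claim~4) that any prescribed point can be excluded from the closure of a smaller admissible $W$, and then (Claim~5) that $W_\alpha$ contains two admissible subsets with disjoint closures. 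Because $F(z)\subset\overline{W_\alpha}$ whenever $z\in W_\alpha\cap Z$, every $\overline{W_\alpha}$ reaches the circle $S$; hence the nested intersections $K_\alpha=\bigcap_n\overline{W_{\alpha\restriction n}}$ form $2^{\aleph_0}$ pairwise disjoint non-degenerate subcontinua of $X$, contradicting Suslinianity. The Borel hypothesis on $Y$ is used only at the very beginning, to pass from the uncountable Borel set $\{z\in\Lambda(Y):\diam F(z)\geq 5\varepsilon\}$ to a Cantor set inside it; no Ramsey-theoretic machinery is involved.
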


For a contradiction suppose that $Y\subset X$ is totally disconnected and  $Z=\Lambda(Y)$ is uncountable.   Then by Lemma 1 there exists $\varepsilon>0$ such that for uncountably many $z$'s, $\diam(F(z))\geq5\varepsilon$. The set of all $z$'s with this property is closed in $Z$, which is Borel, and thus it contains a Cantor set. Let us just assume that $Z $ is any Cantor set in $Y$  with the property $\diam(F(z))\geq 5 \varepsilon$  for each $z\in Z$. We may further assume that $Z$ lies inside of an open ball of radius $\varepsilon$, centered at some point of $Z$.  Let $S$ and $S'$ be the circles of radii $\varepsilon$ and $2\varepsilon$, respectively, centered at the point. Observe that every $F(z)$ crosses $S'$.
 
In the claims  below, all boundaries are respective to $X$ (not $\mathbb R ^2$).

\begin{ucl}Let $U$ be an open subset of $X$ intersecting $Z$ with $\partial U\subset (X\setminus Y)\cup S$. Then for every point $p\in \mathbb R^2$ there exists a connected open set $W\subset U$ intersecting $Z$ such that $p\notin \overline W$ and   $\partial W\subset (X\setminus Y)\cup S$.\end{ucl}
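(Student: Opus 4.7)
The plan is to reduce the claim to locating a point $z\in U\cap Z$ satisfying $p\notin F(z)$. Given such a $z$, the definition $F(z)=\bigcap_{V\in\mathcal U_z}\overline V$ immediately produces some $V\in\mathcal U_z$ with $p\notin\overline V$, and by Remark~1 we may take $V$ path-connected. Let $W$ be the path-component of $z$ in the open set $U\cap V$. By local path-connectedness of $X$, $W$ is open in $X$; since path-components of open sets in locally path-connected spaces are also closed, one gets $\partial W\subset\partial(U\cap V)\subset\partial U\cup\partial V\subset(X\setminus Y)\cup S$, using $\partial V\subset X\setminus Y$ from the definition of $\mathcal U_z$. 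Moreover $z\in W$ gives $W\cap Z\ne\varnothing$, and $\overline W\subset\overline V$ excludes $p$. The degenerate case $p\notin\overline U$ is handled directly: any path-component of $U$ meeting $Z$ already serves as $W$, with boundary contained in $\partial U$.

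Locating $z$ splits on the position of $p$. If $p\notin X$ then $p\notin F(z)\subset X$ holds trivially for every $z$. If $p\in Y$, Lemma~1 gives $F(z)\cap Y=\{z\}$, so $p\in F(z)$ forces $p=z$; since $U\cap Z$ is a non-empty relatively open subset of the Cantor set $Z$, it is uncountable, so all but at most one $z\in U\cap Z$ satisfies $p\ne z$ and hence $p\notin F(z)$.

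The remaining case $p\in X\setminus Y$ is the main obstacle. I would argue by contradiction that some $z\in U\cap Z$ still works. Otherwise $p\in F(z)$ for every $z\in U\cap Z$, producing an uncountable family of non-degenerate continua of diameter at least $5\varepsilon$ all through $p$, meeting $Y$ pairwise trivially by Lemma~1 (that is, $F(z)\cap F(z')\cap Y=\varnothing$ for $z\ne z'$). For each such $z$ I would pass to the component $H(z)$ of $z$ in $F(z)\setminus B(p,\delta)$ for a small fixed $\delta>0$; by the boundary bumping theorem each $H(z)$ is a non-degenerate continuum disjoint from $p$. The delicate step --- and the main obstacle of the proof --- is refining $\{H(z)\}$ into an uncountable pairwise disjoint family, because distinct $F(z)$'s may share points in $X\setminus Y$. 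This refinement plausibly uses Lemma~2, applied to $S$ together with four arcs supplied by suitably chosen $z$'s, to separate the common non-$Y$ portions, and ultimately contradicts that $X$ is Suslinian.
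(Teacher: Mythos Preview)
Your handling of the cases $p\notin X$ and $p\in Y$ is correct: in each of these you can indeed find $z\in U\cap Z$ with $p\notin F(z)$, and the component of $z$ in $U\cap V$ for a suitable $V\in\mathcal U_z$ works as $W$.

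The genuine gap is the case $p\in X\setminus Y$, which you yourself flag as incomplete. Two points. First, your reduction is potentially stronger than the claim: you are trying to locate $z$ with $p\notin F(z)$, i.e.\ an open neighbourhood of $z$ with boundary in $X\setminus Y$ whose closure misses $p$. But the claim only asks for $\partial W\subset (X\setminus Y)\cup S$, and the paper's construction exploits exactly this extra freedom. The set $W$ it produces may well have $\partial W\cap S\cap Y\neq\varnothing$, so it need not witness $p\notin F(z)$ for any $z$. Thus the paper's argument does not establish your stronger statement, and it is not clear that it holds. Second, even granting that your stronger statement is provable, your sketch for it (pass to components $H(z)$ away from $p$ and refine to an uncountable disjoint family) is precisely the step that requires the planar separation machinery; you have deferred, not avoided, the core difficulty.

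The paper does something more direct. For each $z\in Z\cap U\setminus\{p\}$ it takes a non-degenerate subcontinuum $K(z)\subset F(z)$ containing $z$ and missing $S\cup\{p\}$. Since $K(z)\cap Y=\{z\}$ and $X$ is Suslinian, uncountably many of these cannot be pairwise disjoint; hence there exist four points $z_1,z_2,z_3,z_4$ with $K(z_i)\cap K(z_1)\neq\varnothing$ for $i=2,3,4$. Choosing pairwise disjoint $U_i\in\mathcal U_{z_i}$ and arcs $A_i\subset U_i$ from $z_i$ out to $S'$, one applies Lemma~2 with $K=K(z_1)\cup K(z_3)$ (a connected set through $z_1$ and $z_3$, disjoint from $A_2\cup S'\cup A_4\cup\{p\}$). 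If $p$ happens to lie in $U_1\cup S'\cup U_3$ the component of $z_2$ in $U_2\cap U\setminus S$ already works. Otherwise Lemma~2 puts $z_2$ and $z_4$ in distinct components of $\mathbb R^2\setminus(A_1\cup S'\cup A_3\cup K)$; one of these components avoids $p$, and intersecting it with $U_i\cap U\setminus S$ gives the desired $W$. The boundary of this $W$ is allowed to meet $K\subset X\setminus Y$ and $S$, which is why the weaker boundary condition in the claim matters.
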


\begin{proof}For each $z$ in the uncountable set $Z\cap U\setminus \{p\}$ there is a non-degenerate continuum $K(z)\subset F(z)$ containing $z$ and missing $S\cup \{p\}$. Just take any closed neighborhood $N$ of $z$ missing $S\cup \{p\}$, and let $K(z)$ be the component of $z$ in $F(z)\cap N$; by the boundary bumping theorem \cite[Theorem 5.4]{nad} $K(z)$ is non-degenerate. Note that $K(z)\cap Y=\{z\}$ by Lemma 1. By the Suslinian property of $X$  there exist $$z_1,z_2,z_3,z_4\in Z\cap U\setminus \{p\}$$ such that $K(z_i)\cap K(z_1)\neq\varnothing$ for each $i\in \{2,3,4\}$. Let $U_i\in \mathcal U_{z_i}$ be pairwise disjoint. Since $F(z_i)$ extends beyond $S'$, so does $U_i$. Thus there is an arc $A_i\subset U_i$ from $z_i$ to $a_i\in S'$ that intersects $S'$ only at $a_i$. By a permutation of indices $i\in \{2,3,4\}$, we may assume 
  that the $a_i$'s are in cyclic order around $S'$. Let $K=K(z_1)\cup K(z_3)$. Note that $p\notin K$.

\uline{Case 1}: If $p\in U_1\cup S'\cup U_3$,  simply let $W$ be the component of $z_2$ in $U_2\cap U\setminus S$. Then $W$ is a connected open subset of $U$ intersecting $Z$, with $p\notin \overline W$. Note also that $W$ is clopen in $U_2\cap U\setminus S$, which means that  $\partial W\subset \partial(U_2\cap U\setminus S)$.  The boundary of any finite intersection of open sets is contained in the union of the individual boundaries. Therefore $$\partial W\subset \partial U_2 \cup \partial U\cup \partial (X\setminus S)\subset (X\setminus Y)\cup S.$$

\uline{Case 2}: Else $p\in O=\mathbb R^2\setminus (A_1\cup S'\cup A_3\cup K)$. 

By Lemma 2, $z_2$ and $z_4$ are in different components of $O$.   Of these two components, at least one does not contain $p$. Let's say that $V$ is the component of $z_2$ in $O$, and $p\notin V$. The  component of $p$ in $O$ is an open set missing $V$, so $p\notin \overline V$. Let  $W$ be the component of $z_2$ in $U_2\cap U \cap V\setminus S$.  Then $W$ is a connected open subset of $U$ intersecting $Z$, and $p\notin \overline {W}$.   It remains to show that $\partial W\subset (X\setminus Y)\cup S$. Note the following:
 \begin{itemize}\renewcommand{\labelitemi}{\Large$\cdot$}
 \item $\partial V\subset A_1\cup S'\cup A_3\cup K$;
 \item $\overline W$ misses $A_1\cup A_3$ because $W\subset U_2$;
 \item $\overline W$ misses $S'$ because it lies in the closed disc bounded by $S$.
 \end{itemize}
 Hence $\overline{W}\cap \partial V\subset K\setminus \{z_1,z_3\}\subset X\setminus Y.$  We conclude that \begin{align*}
\partial W&\subset \overline W\cap [\partial U_2\cup \partial U \cup \partial V\cup \partial (X\setminus S)] \\
&\subset \partial U_2\cup \partial U \cup (\overline W\cap \partial V)\cup \partial (X\setminus S)\\
&\subset (X\setminus Y)\cup S.\hfill\qedhere\end{align*}\end{proof}

\vspace{-.2cm}
\begin{figure}[h]\includegraphics[scale=0.32]{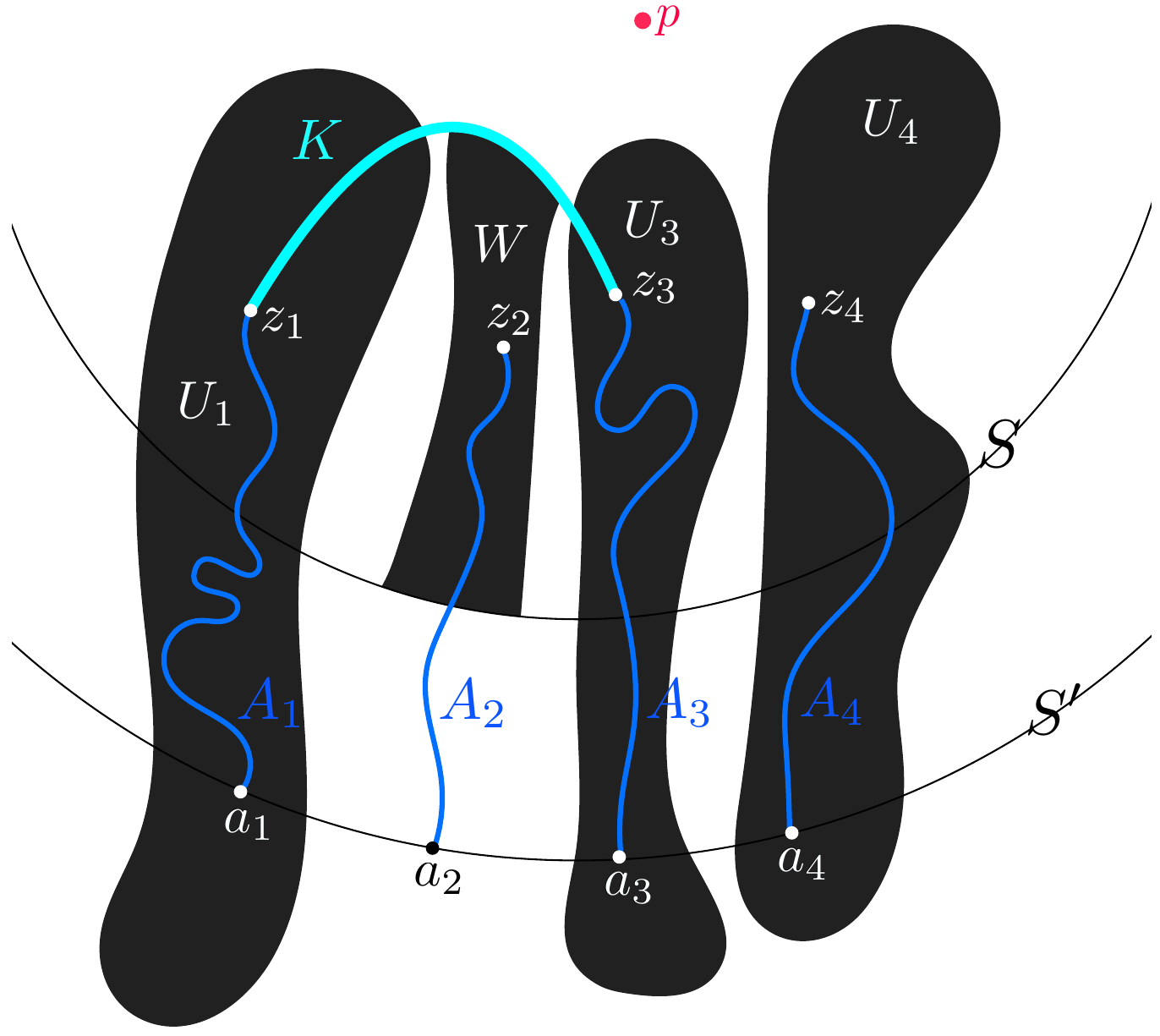}\hspace{.5cm}
\includegraphics[scale=0.32]{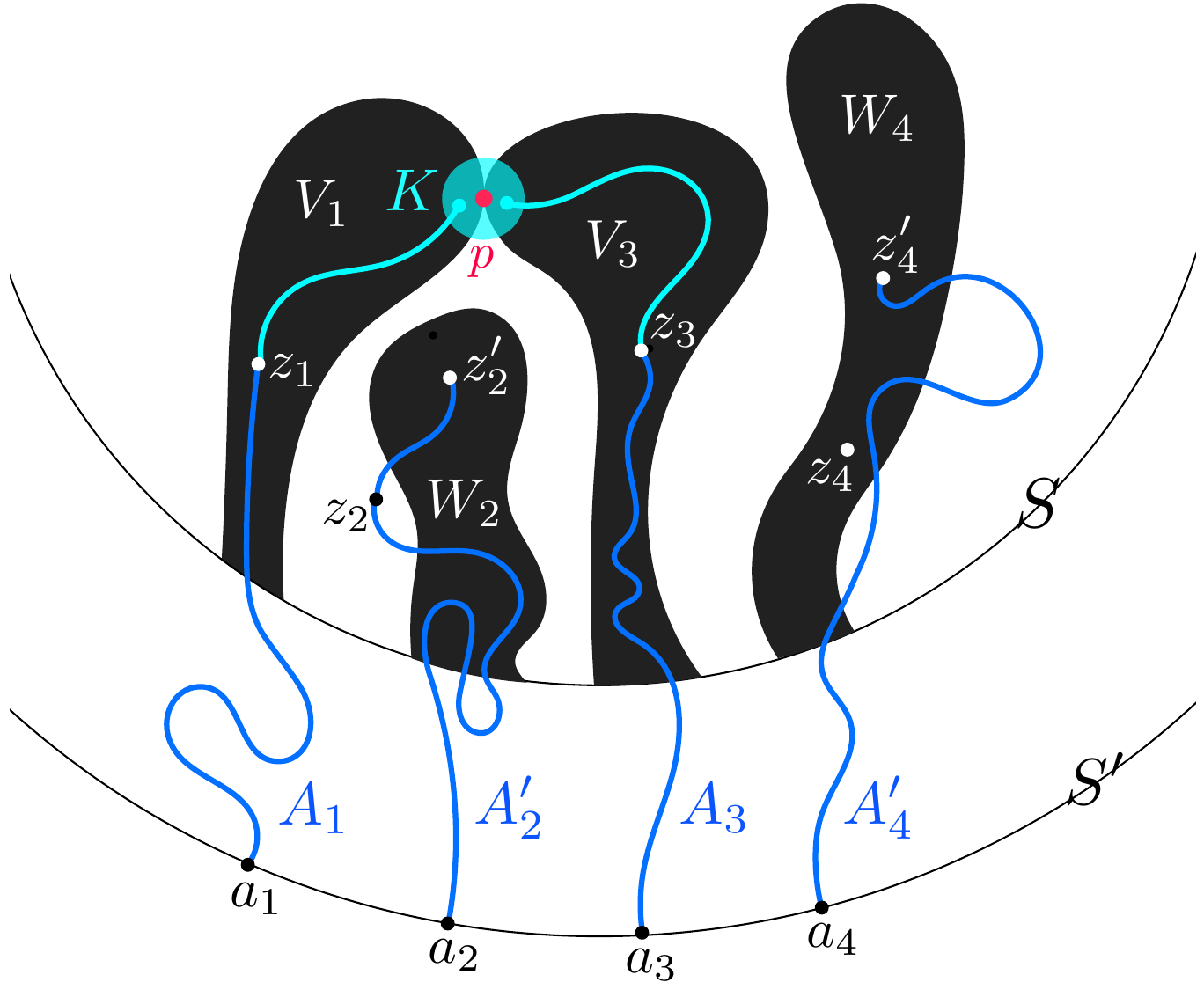}
\caption{Proofs of Claim 4 (\textit{left}) and Claim 5 (\textit{right})}
\end{figure}

\begin{ucl}Let $U$ be an open subset of $X$ intersecting $Z$ with  $\partial U\subset (X\setminus Y)\cup S$. Then there are connected open subsets $W_1$ and $W_2$ of $U$ each intersecting $Z$, with disjoint closures, such that  $\partial W_i\subset (X\setminus Y)\cup S$. \end{ucl}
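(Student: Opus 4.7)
The plan is to apply Claim 4 twice in succession and then a third time to force the closures to be disjoint. First pick any $p\in Z\cap U$ and apply Claim 4 to $U$ with this point to obtain a connected open $W\subset U$ meeting $Z$, with $\partial W\subset(X\setminus Y)\cup S$ and $p\notin\overline{W}$. The set $U':=U\setminus\overline{W}$ is then open in $X$, contains $p$ (so meets $Z$), and its boundary satisfies $\partial U'\subset\partial U\cup\partial W\subset(X\setminus Y)\cup S$. Applying Claim 4 once more to $U'$ yields a connected open $W_2\subset U'$ meeting $Z$ with $\partial W_2\subset(X\setminus Y)\cup S$; note $W_2\cap\overline{W}=\varnothing$ by construction.

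The main obstacle is upgrading $W_2\cap\overline W=\varnothing$ to the strict disjointness $\overline{W_1}\cap\overline{W_2}=\varnothing$, since a priori the closures might still meet along $\partial W\cap\partial W_2\subset(X\setminus Y)\cup S$. Let $K:=\overline W\cap\overline{W_2}$; this is a compact subset of $(X\setminus Y)\cup S$, and because the set $Z$ was taken to lie inside the open $\varepsilon$-ball strictly inside the circle $S$, we have $S\cap Z=\varnothing$, whence $K\cap Z=\varnothing$. I would then apply Claim 4 a third time, now to $W$ itself (which satisfies the hypotheses of Claim 4), to shrink it to a connected open $W_1\subset W$ meeting $Z$, with $\partial W_1\subset(X\setminus Y)\cup S$ and $\overline{W_1}\cap K=\varnothing$; then $\overline{W_1}\cap\overline{W_2}\subset K\cap\overline{W_1}=\varnothing$ as required.

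The technical crux is this last shrinking: Claim 4 as stated avoids only a single point $p$, while the above requires avoidance of the whole compact set $K\subset X\setminus Z$. I expect this strengthening to follow by inspecting Claim 4's proof — the single point $p$ enters only through choosing closed neighborhoods $N$ of the auxiliary points $z_i$ (which can be made disjoint from any given compact set in $X\setminus Z$, since $z_i\in Z$ and $K\cap Z=\varnothing$) and through selecting the component of $O$ produced by Lemma 2 that avoids $p$ (which continues to work after further refining the continua $K(z_i)$ and the auxiliary opens $U_i$ so that the planar separation of Lemma 2 isolates the shrunk $W_1$ from all of $K$, not merely from a single point). I expect the right panel of Figure 1 to illustrate this geometric separation.
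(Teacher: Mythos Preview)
Your route differs from the paper's. The paper takes four points $z_1,\dots,z_4\in Z\cap U$ at once, sets $V_i$ to be the component of $z_i$ in $U_i\cap U\setminus S$, and (assuming $\overline{V_1}\cap\overline{V_3}$ contains a point $p$) applies Claim~4 once inside $V_2$ and once inside $V_4$ to produce $W_2,W_4$ whose closures miss $p$. It then threads a continuum $K=B_1\cup\overline V\cup B_3$ through $p$ joining $z_1$ to $z_3$ (with $V$ a small connected neighborhood of $p$ chosen \emph{after} $W_2,W_4$, so that $\overline V$ misses $\overline{W_2}\cup\overline{W_4}\cup S'\cup A_2'\cup A_4'$) and invokes Lemma~2. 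The conclusion is that $\overline{W_2}$ and $\overline{W_4}$, being \emph{continua} disjoint from the separator $A_1\cup S'\cup A_3\cup K$, lie in distinct complementary components and are therefore disjoint.

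The gap in your plan is exactly where you locate it, and your proposed fix does not close it. In Case~2 of Claim~4 the single point $p$ is used to select one of the two Lemma~2 components of $O$: whichever one avoids $p$ is the one we work in. A compact set, even one disjoint from $Z$ and from the separator, can meet \emph{both} of those components, and then neither choice yields a $W$ whose closure misses it; refining the $K(z_i)$ or the $U_i$ does nothing about the fact that Lemma~2 hands you only two regions. What makes the paper's argument work is not a compact-set strengthening of Claim~4 but the order of construction: the separator is built \emph{last}, after $W_2$ and $W_4$ are already fixed, and is tailored to miss their closures; connectedness then pins each $\overline{W_i}$ to a single component. In your scheme $W$ and $W_2$ are fixed before any separator is in sight, so there is no mechanism to keep the eventual separator away from $\overline{W_2}$, and the bootstrap from Claim~4 stalls.
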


\begin{proof}Let $z_1,z_2,z_3,z_4\in Z\cap U$. Let $U_i\in \mathcal U_{z_i}$ be pairwise disjoint. Let $A_i$ be an arc in $U_i$ from $z_i$ to  $a_i\in S'$,  intersecting $S'$ only at $a_i$. Assume that the $a_i$'s are in cyclic order.  Let $V_i$ be component of $z_i$ in $U_i\cap U\setminus S$. Assume that $\overline {V_1}$ and $ \overline {V_3}$ have a common point $p$ (if they are disjoint then we're done). Note that $p\notin U_2\cup U_4$, so $p\notin A_2\cup A_4\cup V_2\cup V_4$.

By Claim 4, there exist $z_2',z_4'\in Z$ and connected open subsets $W_2$ and $W_4$  of $V_2$ and $V_4$ whose closures miss $p$, such that $z_i'\in W_i$ and $\partial W_i\subset (X\setminus Y)\cup S$. For each $i\in \{2,4\}$,  working within $V_i$ we can modify $A_i$ to an arc $A_i'$ that ends at $z_i'$  instead of $z_i$. The other endpoint of $A_i'$ is still $a_i$, and $p\notin A_i'$. Let $V$ be a  connected neighborhood of $p$ with closure missing $\overline{W_2}\cup \overline{W_4}\cup S'\cup A_2'\cup A_4'$. For each $i\in \{1,3\}$ let   $B_i$ be an arc in $V_i$ from $z_i$ into $V$.  Let $K=B_1\cup B_3\cup \overline V$.   By  Lemma 2, $z_2'$ and $z_4'$ are in different components of $\mathbb R^2\setminus(A_1\cup S'\cup A_3\cup K)$. The continua $\overline{W_2}$ and $\overline{W_4}$ are contained  in these components, hence they are disjoint. \end{proof}

By Claim 5 there are two connected open sets $W_{\langle 0\rangle}$ and $W_{\langle 1\rangle}$ in $X\setminus S$, intersecting $Z$, with disjoint closures and boundaries in $(X\setminus Y)\cup S$. Assuming that $\alpha\in 2^{<\mathbb N}$ is a finite binary sequence and $W_{\alpha}$ has been defined, apply Claim 5 to get connected open  subsets $W_{\alpha^\frown 0}$ and $W_{\alpha^\frown 1}$ of $W_\alpha$, each intersecting $Z$, with disjoint closures and boundaries in $(X\setminus Y)\cup S$. Their boundaries must meet $S$ because each $F(z)$ meets $S'$. So for every infinite sequence $\alpha\in 2^{\mathbb N}$, $$K_{\alpha}=\bigcap_{n=1}^\infty \overline{W_{\alpha\restriction n}}$$ is a continuum in $X$  stretching  from $Z$ to $S$. Clearly if $\alpha\neq\beta$ then $K_{\alpha}$ and $K_{\beta}$ are disjoint. Therefore $\{K_\alpha:\alpha\in 2^{\mathbb N}\}$ is an uncountable collection of pairwise disjoint non-degenerate subcontinua of $X$, a contradiction to  the Suslinian property of $X$. Hence $\Lambda(Y)$ must have been countable. This completes the proof of Theorem 3. \hfill$\blacksquare$

\medskip

Theorem A is a direct consequence of Theorem 3 and:

\begin{ut}Every Suslinian plane continuum is contained in a locally connected Suslinian plane continuum. \end{ut}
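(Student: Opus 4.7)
I would build $X'$ in two stages: first enclose $X$ in a large Jordan curve so that the unbounded complementary component becomes Jordan-bounded, then fill each bounded complementary component with a one-dimensional, locally connected, Suslinian ``skeleton''. Pick a closed round disc $D$ with $X\subset\inter(D)$ and an arc $\gamma\subset\inter(D)\setminus X$ joining a point of $X$ to a point of $\partial D$; set $X_1:=X\cup\gamma\cup\partial D$. Then $X_1\supseteq X$ is a Suslinian plane continuum whose unbounded complementary component in $\mathbb R^2$ is $\mathbb R^2\setminus D$ with Jordan boundary. Enumerate the (at most countably many) bounded components of $\mathbb R^2\setminus X_1$ as $U_1,U_2,\ldots$. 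For each $n$ I would construct a one-dimensional, locally connected, Suslinian continuum $K_n\subset\overline{U_n}$ with $\partial U_n\subset K_n$ such that the components of $U_n\setminus K_n$ are Jordan domains of diameter tending to zero; then $X':=X_1\cup\bigcup_n K_n$ is the desired continuum.

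To construct $K_n$, fix a Riemann map $\phi_n\colon\mathbb D\to U_n$ and let $\pi_n\colon\overline{\mathbb D}\to\overline{U_n}$ be its continuous extension through the prime-end compactification (so $\pi_n(\partial\mathbb D)=\partial U_n$). In $\overline{\mathbb D}$ I would build $T_n$ as the union of $\partial\mathbb D$, concentric circles $C_k:=\{|z|=r_k\}$ with $r_k\uparrow 1$, and, for each $k$, $N_k$ equally spaced radial segments connecting $C_k$ to $C_{k+1}$, with $N_k$ chosen large enough (using the uniform continuity of $\phi_n$ on the compact annulus $\{r_k\le|z|\le r_{k+1}\}\subset\mathbb D$) that every resulting curvilinear quadrilateral has $\phi_n$-image of Euclidean diameter less than $2^{-n-k}$. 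One checks that $T_n$ is closed, connected, and locally connected --- the only non-routine verification being at a point of $\partial\mathbb D$, where the nested grid accumulates onto $\partial\mathbb D$ through the $C_k$'s and through radials at appropriate angles, so that any small ball meets $T_n$ in a connected set. Set $K_n:=\pi_n(T_n)$. Then $K_n$ is locally connected by Hahn--Mazurkiewicz, and it is a countable union of the Suslinian continua $\partial U_n\subset X_1$, the Jordan curves $\phi_n(C_k)$, and the arcs which are $\phi_n$-images of the individual radial segments, hence Suslinian. The components of $U_n\setminus K_n$ are precisely the $\phi_n$-images of the open quadrilaterals, so they are Jordan domains of diameter less than $2^{-n-k}$.

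Putting this together: the complementary components of $\mathbb R^2\setminus X'$ consist of $\mathbb R^2\setminus D$ and the Jordan domains inside the $U_n$'s, all with Jordan (hence locally connected) boundaries, and for every $\varepsilon>0$ only finitely many have diameter $\ge\varepsilon$, so by the Whyburn criterion recalled in the introduction $X'$ is locally connected. Moreover, $X'=X_1\cup\bigcup_n K_n$ is a countable union of closed Suslinian sets; given any uncountable collection of pairwise disjoint non-degenerate subcontinua of $X'$, a Baire-category argument inside each member produces a non-degenerate subcontinuum lying in one of the countably many Suslinian pieces, forcing the collection to be countable by pigeonhole. The main obstacle is the failure of $\phi_n$ to extend continuously to $\overline{\mathbb D}$ when $\partial U_n$ is not locally connected: this forces the diameter estimates to be made on the compact annuli $\{r_k\le|z|\le r_{k+1}\}\subset\mathbb D$ separately, and local connectedness of $T_n$ (and hence of $K_n$) at points of $\partial\mathbb D$ has to be established by a purely topological accumulation argument rather than through any conformal continuity statement.
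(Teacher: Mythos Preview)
Your strategy---concentric simple closed curves in each complementary domain, joined by short crosscuts to form a null grid of Jordan domains, then Whyburn's criterion---is exactly the paper's. But there is a real gap in the write-up: the map $\pi_n\colon\overline{\mathbb D}\to\overline{U_n}$ you invoke does not exist when $\partial U_n$ fails to be locally connected. The Riemann map extends to a homeomorphism from $\overline{\mathbb D}$ onto the prime-end compactification $\widehat{U_n}$, but there is in general no continuous surjection $\widehat{U_n}\to\overline{U_n}$ extending the identity on $U_n$ (the impression of a prime end need not be a single point). You acknowledge this obstacle in your final paragraph, yet both your definition $K_n:=\pi_n(T_n)$ and your appeal to Hahn--Mazurkiewicz for the local connectedness of $K_n$ rest on the nonexistent $\pi_n$.

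The repair is easy and, once made, brings you in line with the paper. Set $K_n:=\partial U_n\cup\phi_n(T_n\cap\mathbb D)$ (equivalently, the closure in $\mathbb R^2$ of $\phi_n(T_n\cap\mathbb D)$); this is closed, connected, contains $\partial U_n$, and its complement in $U_n$ is precisely the null family of Jordan domains $\phi_n(Q)$. Then drop the separate claim that each $K_n$ is locally connected---it is not needed, since Whyburn's criterion already gives local connectedness of $X'$ from the fact that the complementary components are Jordan domains forming a null sequence. The paper proceeds exactly this way, and it also dispenses with conformal maps altogether: it simply posits, in each $U_m$, a sequence of disjoint concentric simple closed curves converging to $\partial U_m$, subdivides each resulting annulus by finitely many arcs into pieces of diameter $<1/(m+n)$, and then observes that $X'\setminus X$ is a countable union of arcs so $X'$ is Suslinian. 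Your Baire-category pigeonhole for the Suslinian conclusion is correct and is just that last observation spelled out.
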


\begin{proof}Let $X$ be a Suslinian plane continuum, and $U_0,U_1,U_2,\ldots$ the components of $\mathbb R^2\setminus X$ with $U_0$ unbounded. For each $m\geq 0$,  in the domain $U_m$ there is a sequence $S^m_0,S^m_1\ldots$ of  disjoint, concentric simple closed curves limiting to the boundary of $U_m$. Put $D^0_0=\varnothing$ and for $m\geq 1$ let $D^m_0$ be the closed topological disc bounded by $S^m_0$. For each $m\geq 0$ and  $n\geq 1$ let $D^m_n$ be the closed annulus bounded by $S^m_{n-1}$ and $S^m_{n}$. In $D^m_n$ there is a finite collection of arcs $\mathcal I^m_n$ covering $\partial D^m_n$ such that each component of $D^m_n\setminus \bigcup  \mathcal I^m_n$ has diameter less than $\sfrac{1}{m+n}$.  We can easily arrange that the boundaries of these components are simple closed curves, and $X^m_n=\bigcup  \mathcal I^m_n$ is connected.  Let $$X'=X\cup \bigcup_{m=0}^\infty\bigcup_{n=0}^\infty X^m_n.$$ The components of $\mathbb R^2\setminus X'$ form a null-sequence, and their boundaries are simple closed curves. Hence by \cite[VI Theorem 4.4]{why},  the continuum $X'$ is locally connected. Because $X' \setminus X$ is a countable union of arcs, $X'$ is Suslinian.
\end{proof}

\section{Corollaries}

As indicated in §1, Corollary B follows from Theorem A and \cite[Theorem 3.5]{fee}, while Corollary C follows from Corollary B and \cite[Theorem 1]{co2h}.

\section{Examples}

First is a locally connected extension of the continuum  in \cite{w4} with the same buried set:  the totally disconnected and (weakly) $1$-dimensional space $K$  by Kuratowski. A  brief description of $K$ is given in the remark following the example.

\begin{ue}There exists a locally connected plane continuum whose buried set is totally disconnected and $1$-dimensional.\end{ue}

\begin{proof}The continuum $Z$ in \cite{w4} 
 is constructed such that $\mathbb R^2 \setminus Z = \bigcup \mathcal W$, where $\mathcal W$ consists of a countable number of disjoint connected open subsets of $\mathbb R^2$ with simple closed  curves as boundaries. One component, say $W_0$, is unbounded and the others, say  $W_1, W_2, \ldots$, form a sequence  of bounded domains. Let $\mathcal I_n$, for $n \geq1$, be a finite set of arcs in $W_n$  such that the collection  $\mathcal V_n$ of components of $W_n \setminus \bigcup \mathcal I_n$  has mesh less than $\sfrac{1}{n}$. We can easily arrange that the boundary of every element of $\mathcal V_n$ is a simple closed curve, and $Z_n=\partial W_n\cup \bigcup \mathcal I_n$ is connected. As in Theorem 6, 
$$Z' =Z\cup \bigcup_{n=1}^\infty Z_n$$ is a locally connected continuum. Clearly, $\bur(Z) \subset \bur(Z')$. Points in $Z' \setminus Z$ have finite graph neighborhoods in $Z'$, so are not in $\bur(Z')$. If $z \in Z \setminus \bur(Z)$, then $z\in \partial W_n$ for some $n$. Let $V_1, V_2,\ldots, V_k$ be components of $W_n \setminus X_n$. Then $z \in \partial V_i$ for some $i \in \{1,\ldots, k\}$. So
$\bur(Z') = \bur(Z)=K$.
\end{proof}
\begin{figure}[h]
\includegraphics[scale=0.35]{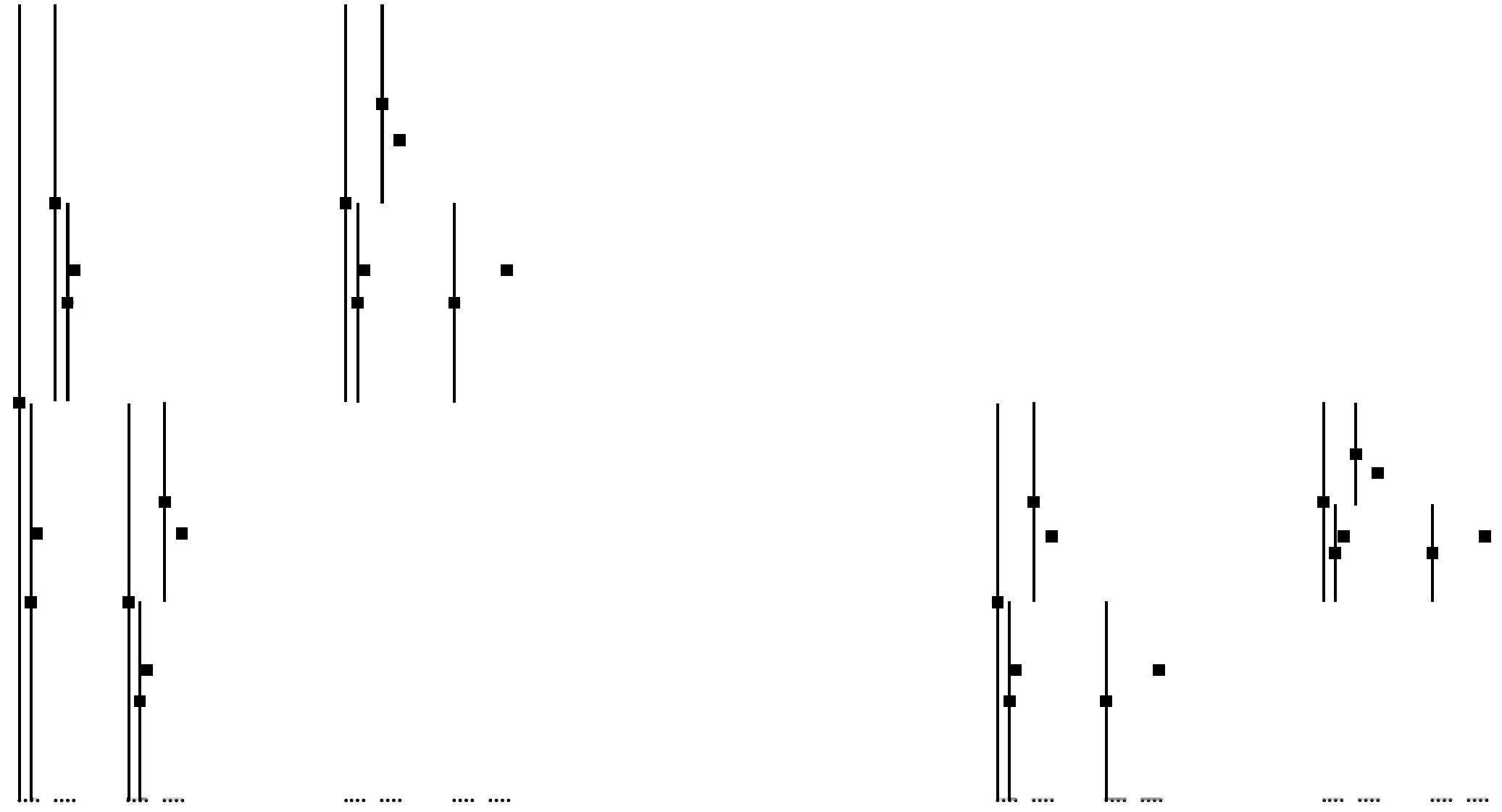}
\caption{The closure of $K$ consists of the points and vertical arcs. The vertical arcs are supported by $C_1$, while the isolated points are over  $C_0$. }
\end{figure}
\begin{ur}Kuratowski's space $K$ is the graph of a particular function $f:C\to [-1,1]$ defined on the  middle-thirds Cantor set $C$. The function $f$  has a simple algebraic formula based on   binary representations of members of $C$ (see \S 4 of \cite{w4}).

If $C_1$ is the countable set consisting of $0$ and all right endpoints of the intervals $$\textstyle(\frac{1}{3}, \frac{2}{3}),(\frac{1}{9}, \frac{2}{9}),(\frac{7}{9}, \frac{8}{9}),\ldots$$ removed from $[0,1]$ to obtain $C$, then for every $x\in C_1$ the space $K$  is $1$-dimensional at $\langle x,f(x)\rangle$. On the other hand, $f$ is continuous at each point of   $C_0=C\setminus C_1$. Thus if $x\in C_0$ then $K$ is zero-dimensional at $\langle x,f(x)\rangle$. See Figure 2.

The crux of \cite{w4} is that $\overline K$ lies inside of a plane continuum $Z$ with $\bur(Z)=K$. Below is an illustration of $K$ and its closure in $C\times [-1,1]$. \end{ur}

Lastly, we observe that Theorem A is false outside of the plane.

\begin{ue}There exists a Suslinian  dendroid whose endpoint set is totally separated and $1$-dimensional at every point.  \end{ue}

Such an example may be constructed from an upper semi-continuous decomposition of the Lelek fan.

\end{document}